\newtheorem{theorem}{Theorem}[section]
\newtheorem{lemma}[theorem]{Lemma}
\theoremstyle{definition}
\newtheorem{definition}[theorem]{Definition}
\newtheorem{corollary}[theorem]{Corollary}
\theoremstyle{remark}
\newtheorem{remark}[theorem]{Remark}
\numberwithin{equation}{section}
\begin{document}

\title{A remark on $\mathbb{T}$-valued cohomology groups of algebraic group actions}

\author{Yongle Jiang}
\address{Yongle Jiang, Department of Mathematics, SUNY at Buffalo, NY \text{14260-2900}, U.S.A.}
\email{yongleji@buffalo.edu}


\subjclass[2010]{Primary 22D40, 46L10; Secondary 28D05, 20J06}

%

\keywords{Algebraic actions; cohomology groups; cocycle superrigid actions; random groups}

\begin{abstract}

We prove that for a weakly mixing algebraic action $\sigma: G\curvearrowright(X,\nu)$, the $n$th cohomology group $H^n(G\curvearrowright X; \mathbb{T})$, after quotienting out the natural subgroup $H^n(G,\mathbb{T})$, contains $H^n(G,\widehat{X})$ as a natural subgroup for $n=1$. If we further assume the diagonal actions $\sigma^2, \sigma^4$ are $\mathbb{T}$-cocycle superrigid and $H^2(G, \widehat{X})$ is torsion free as an abelian group, then the above also holds true for $n=2$. Applying it for principal algebraic actions when $n=1$, we show that $H^2(G,\mathbb{Z}G)$ is torsion free as an abelian group when $G$ has property (T) as a direct corollary of Sorin Popa's cocycle superrigidity theorem; we also use it (when $n=2$) to answer, negatively, a question of Sorin Popa on the 2nd cohomology group of Bernoulli shift actions of property (T) groups.

\end{abstract}

\maketitle

\section{Introduction}

Let $G$ be a countable infinite discrete group with integer group ring $\mathbb{Z}G$. Recall that an \textbf{algebraic $G$-action} is a homomorphism $\alpha: G\to Aut(X)$ from $G$ to the group of (continuous) automorphisms of a compact metrizable abelian group $X$. By duality, such an action of $G$ is determined by a $\mathbb{Z}G$-module $\widehat{X}$ (see \cite[Section 1]{Li-Peterson-Schmidt} for details). When $\widehat{X}=\frac{\mathbb{Z}G}{\mathbb{Z}Gf}$ for some $f\in\mathbb{Z}G$, the corresponding algebraic action is called \textbf{a principal algebraic $G$-action}. A central question on a principal algebraic action is whether it is measurable conjugate to a Bernoulli shift under natural assumptions. It is trivial to observe that the answer is yes when assuming $f=k\in \mathbb{Z}G$ for some integer $k$. However, it is a highly non-trivial theorem saying that the answer is yes when $G=\mathbb{Z}^d$ for all positive integer $d$ if we assume the action has completely positive entropy \cite{Schmidt-Rudolph}. When $G$ is non-abelian, this question is still wide open.

Towards this problem, it is natural to first investigate whether Bernoulli shifts and principal algebraic actions share any common dynamical properties, such as ergodicity, weakly mixing, mixing, freeness, completely positive entropy etc. However, it turns out that even these weaker questions are usually difficult to settle. We refer the reader to \cite{Bowen_Li, Ben, Kerr} for research on these questions. Very recently, cohomological methods have been employed to study ergodicity of principal algebraic actions in \cite{Li-Peterson-Schmidt}.

However, the entropy theory of principal algebraic actions has received much attention in the past decade from amenable groups case in \cite{Den, DS, Li, Li-Thom, Schmidt} to sofic groups in \cite{Bow4, Bowen_Li, B.Hayes} due to the emergence of sofic entropy in \cite{Bowen_sofic, Kerr_Li}.

Our motivation behind this paper is to study the above questions from the viewpoint of von Neumann algebras, following \cite{M_R_Vaes, Popa_Vaes_1, Popa_Vaes_2} in spirit.

In the theory of von Neumann algebras, especially II$_1$ factors, a fundamental theme is to study various rigidity phenomena such as finding $\mathbb{T}$-cocycle superrigid actions, where $\mathbb{T}$ denotes the unit circle. Recall that a free, ergodic, probability measure preserving (p.m.p.) action of a countable discrete group $G$ on a standard probability space $(X,\nu)$ is said to be \textbf{$\mathbb{T}$-cocycle superrigid} if any $\mathbb{T}$-valued 1-cocycle for the action $G\curvearrowright (X,\nu)$ must be cohomologous (see Definition \ref{cohomologous}) to a homomorphism from $G$ to $\mathbb{T}$.

Denote by \textbf{$\mathcal{G}$} the class of groups $G$ such that the standard Bernoulli shift action $G\curvearrowright\prod_{g\in G}(X_0, \mu_0)$ is $\mathbb{T}$-cocycle superrigid, where $(X_0,\mu_0)$ is any standard probability space. The following (non-amenable) groups are shown to be in $\mathcal{G}$ mainly by Sorin Popa's powerful deformation/rigidity theory \cite{Popa_4} and Jesse Peterson's $L^2$-rigidity theory \cite{Peterson}.

\begin{itemize}
  \item \cite{Sysak-Popa} $G$ is weakly rigid, i.e.  there exists an infinite normal subgroup $H<G$ such that $(G, H)$ has relative property (T); in particular, when $G$ has property (T).

  \item \cite{Peterson-Sinclair} When $L(G)$ is a $L^2$-rigid II$_1$ factor in the sense of Peterson \cite{Peterson}; for example, when $L(G)$ is a II$_1$ factor which is either non-prime or has property $(\Gamma)$.

  \item \cite{Popa_3} If $G$ has an infinite rigid subgroup $H$ such that $H$ is wq-normal or w-normal.

  \item \cite{Popa_2} $G=F_2\times F_2$, where $F_2$ is the free group on two letters. For the general version, see \cite[Theorem 1.1, Corollary 1.2]{Popa_2}.

  \item \cite{Robin Tucker-Drob} $G$ is inner amenable but not amenable.
\end{itemize}

Therefore, one natural question to ask is whether a principal algebraic $G$-action (or in general an algebraic action) is $\mathbb{T}$-cocycle superrigid when $G\in\mathcal{G}$ under natural assumptions.

Motivated by this question and inspired by \cite[Corollary 4.2]{Peterson-Sinclair}\cite[Proof of Proposition 5.2]{K.Schmidt_2}, we prove the following theorem.
\begin{theorem}\label{maintheorem}
Let $G$ be a countable infinite discrete group, $X$ be a compact metrizable abelian group with Haar measure $\nu$ and $\sigma: G\curvearrowright (X, \nu)$ be an algebraic action. If $\sigma$ is weakly mixing as a p.m.p. action, then we have an injective group homomorphism
$$H^1(G, \widehat{X})\hookrightarrow \frac{H^1(G\curvearrowright X;\mathbb{T})}{H^1(G, \mathbb{T})}.$$
Under further assumptions that the diagonal actions $\sigma^2:G\curvearrowright(X^2, \nu^2)$, $\sigma^4:G\curvearrowright(X^4, \nu^4)$ are $\mathbb{T}$-cocycle superrigid and $H^2(G, \widehat{X})$ is torsion free as an abelian group, we also have an injective group homomorphism
$$H^2(G, \widehat{X})\hookrightarrow \frac{H^2(G\curvearrowright X;\mathbb{T})}{H^2(G, \mathbb{T})}.$$
\end{theorem}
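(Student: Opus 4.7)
The plan is to construct, for each $n$, a natural group homomorphism
$$\Phi^n\colon H^n(G, \widehat X)\longrightarrow \frac{H^n(G\curvearrowright X;\mathbb T)}{H^n(G,\mathbb T)}$$
induced by the $G$-equivariant inclusion $\widehat X\hookrightarrow L^0(X,\mathbb T)$ (a character is a $\mathbb T$-valued measurable function), and then to prove injectivity using the long exact sequence associated to
$$0\longrightarrow \mathbb T\oplus\widehat X\longrightarrow L^0(X,\mathbb T)\longrightarrow L^0(X,\mathbb T)/(\mathbb T\oplus\widehat X)\longrightarrow 0.$$
Here the direct-sum decomposition $H^n(G,\mathbb T)\oplus H^n(G,\widehat X)$ on the left is legitimate because $\mathbb T\cap\widehat X=\{\mathbf 1\}$ in $L^0(X,\mathbb T)$. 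Injectivity of $\Phi^n$ amounts to showing that the image of the connecting homomorphism $H^{n-1}(G,L^0(X,\mathbb T)/(\mathbb T\oplus\widehat X))\to H^n(G,\mathbb T)\oplus H^n(G,\widehat X)$ meets the $H^n(G,\widehat X)$-summand trivially.

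For $n=1$, the reduction is the concrete claim that if $b\in L^0(X,\mathbb T)$ satisfies $b(gy)/b(y)=\phi(g)\,\chi_g(y)$ for all $g\in G$, with $\phi(g)\in\mathbb T$ and $\chi_g\in\widehat X$, then $b$ is itself of the form $c\cdot\chi$ with $c\in\mathbb T$ and $\chi\in\widehat X$. I would Fourier-expand $b=\sum_\nu\hat b(\nu)\,\nu$ in $L^2(X)$; substitution converts the equation into $\hat b(\nu-\chi_g)=\phi(g)\,\hat b(g^{-1}\cdot\nu)$, so $|\hat b|$ is invariant under the affine right action $T_g\colon\nu\mapsto g^{-1}\cdot\nu-g^{-1}\cdot\chi_g$ of $G$ on $\widehat X$ (that this is a group action is the chain rule for $g\mapsto b\circ\sigma_g/b$). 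The $\ell^2$-constraint on $\hat b$ limits the support to finite orbits; the condition $|b|=1$ a.e., expressed as the autocorrelation identity $\sum_\nu\hat b(\nu)\overline{\hat b(\nu-\xi)}=\delta_{\xi,0}$, rules out multi-point orbits; and weak mixing of $\sigma$ (equivalently, no non-trivial finite $G$-orbit on $\widehat X$ under the linear action) leaves only a fixed point of $T$, forcing $\hat b$ to be concentrated at a single character and giving the claim.

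For $n=2$, suppose $\chi\in Z^2(G,\widehat X)$ satisfies $\chi=\phi+\delta f$ for some $\phi\in Z^2(G,\mathbb T)$ and $f\in C^1(G,L^0(X,\mathbb T))$. Apply the \emph{diagonal trick}: set $F(g)(x,y):=f(g)(x)/f(g)(y)\in L^0(X^2,\mathbb T)$. Dividing $\chi=\phi+\delta f$ across two coordinates cancels the scalar $\phi$ and produces $\delta F=\widetilde\chi$ in $L^0(X^2,\mathbb T)$, where $\widetilde\chi(g,h)(x,y):=\chi(g,h)(x)/\chi(g,h)(y)$ is the image of $\chi$ under the $G$-equivariant inclusion $\widehat X\hookrightarrow\widehat{X^2}$, $\eta\mapsto(\eta,-\eta)$. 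The retraction $(\eta_1,\eta_2)\mapsto\eta_1-\eta_2$ composes with this inclusion to give multiplication by $2$ on $\widehat X$, so once I prove $[\widetilde\chi]=0$ in $H^2(G,\widehat{X^2})$ I will have $2[\chi]=0$ in $H^2(G,\widehat X)$, and the torsion-free hypothesis closes the argument. To obtain $[\widetilde\chi]=0$, note that the identity $\delta F=\widetilde\chi$ kills its image in $H^2(G,L^0(X^2,\mathbb T))$ outright (with no $\mathbb T$-ambiguity), so by the LES on $X^2$ we have $[\widetilde\chi]=\partial[\bar F]$ for some class in $H^1(G,L^0(X^2,\mathbb T)/\widehat{X^2})$. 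The $\mathbb T$-cocycle superrigidity of $\sigma^2$ identifies $H^1(G,L^0(X^2,\mathbb T))$ with $\mathrm{Hom}(G,\mathbb T)$, and the $n=1$ part of the theorem applied to $\sigma^2$ (still weakly mixing) gives $H^1(G,\widehat{X^2})=0$; chasing the LES then reduces the problem to showing that a certain $\mathbb T$-valued $1$-cochain on $X^4$, obtained by a second diagonal trick, is cohomologous to zero modulo $\widehat{X^4}$, which is what $\mathbb T$-cocycle superrigidity of $\sigma^4$ supplies.

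The main obstacle will be the bookkeeping at the second iteration of the diagonal trick: the two superrigidity outputs (from $\sigma^2$ and $\sigma^4$) each contribute a homomorphism $G\to\mathbb T$, and the plan requires these contributions to compound so that the remaining obstruction lives entirely in $2\cdot H^2(G,\widehat X)$, so that the torsion-free hypothesis on $H^2(G,\widehat X)$ can finish the job.
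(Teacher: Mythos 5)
Your $n=1$ argument is correct and takes a genuinely different route from the paper's. You argue on the dual side: Fourier-expanding $b$, noting that $|\widehat b|$ is invariant under an affine $G$-action on $\widehat X$ whose linear part is the dual action, so square-summability confines the support of $\widehat b$ to finite affine orbits, and weak mixing (no nontrivial finite orbits of the linear dual action) collapses everything to a single character. Two small repairs: it is not the autocorrelation identity that rules out multi-point orbits ($|b|=1$ alone is compatible with finite multi-point spectrum, e.g.\ on a finite group); what does it is weak mixing applied to the difference set $O-O$ of a finite affine orbit $O$, which is finite and invariant under the linear dual action, hence $\{0\}$, so finite affine orbits are fixed points, and the same argument shows the fixed point is unique; after that $|\widehat b(\nu_0)|=1$ is automatic. (Also, in your general framing, injectivity of $\Phi^n$ needs the image of the connecting map to have trivial \emph{projection} to the $H^n(G,\widehat X)$-summand, not merely trivial intersection with it; your concrete arguments do not use this, so it is harmless.) By contrast, the paper forms $b(x+y)b(x)^{-1}b(y)^{-1}$, uses weak mixing of the diagonal action to make it constant, and then needs Lemma~\ref{lemma1} to upgrade an a.e.-multiplicative $b$ to a genuine character; your route trades that lemma for the standard dual characterization of weak mixing. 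Either is fine.

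For $n=2$ there is a genuine gap, and it sits exactly where your closing paragraph calls it ``bookkeeping.'' The first diagonal trick is sound: $F_g(x,y)=f_g(x)\overline{f_g(y)}$ kills the scalar cocycle $\phi$, $\delta F=\widetilde\chi$, and the retraction observation correctly shows that $[\widetilde\chi]=0$ in $H^2(G,\widehat{X^2})$ would give $2[\chi]=0$, hence $[\chi]=0$ by torsion-freeness. But your mechanism for proving $[\widetilde\chi]=0$ does not close. Cocycle superrigidity of $\sigma^2$, resp.\ $\sigma^4$, is a statement about elements of $Z^1(G,L^0(X^2,\mathbb T))$, resp.\ $Z^1(G,L^0(X^4,\mathbb T))$, and none of the objects your chase produces is such a cocycle: $F$ is not ($\delta F=\widetilde\chi\neq 1$); $\bar F$ is a cocycle but valued in the quotient module $L^0(X^2,\mathbb T)/\widehat{X^2}$, about which superrigidity says nothing --- deciding which classes of $H^1(G,L^0(X^2,\mathbb T)/\widehat{X^2})$ lift is precisely the hard content the long exact sequence hides; and a second diagonal cochain $F_g(z)\overline{F_g(w)}$ on $X^4$ again has nontrivial coboundary (the image of $\widetilde\chi$ in $\widehat{X^4}$), so superrigidity of $\sigma^4$ does not apply to it either. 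The diagonal trick only ever cancels constants (its kernel is $\mathbb T$); it cannot cancel the character-valued part. Note also that the statement you have reduced to --- a $\widehat{X^2}$-valued $2$-cocycle that dies in $H^2(G\curvearrowright X^2;\mathbb T)$ is algebraically trivial --- is an instance of the very theorem being proved, with $\sigma^2$ in place of $\sigma$; iterating your reduction would call for superrigidity of $\sigma^8$, so the scheme regresses rather than terminates.

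What is missing is the use of the additive structure of $X$: from $\delta t=c$ with $c$ character-valued one forms $\hat t_g(x,y)=t_g(x+y)\overline{t_g(x)t_g(y)}$, which \emph{is} a $\mathbb T$-valued $1$-cocycle for $\sigma^2$ because characters are multiplicative. That is the paper's move, and it is only the beginning: superrigidity of $\sigma^2$ then yields an untwisting $u\in L^0(X^2,\mathbb T)$, and the real work is to show $u$ may be taken to be a symmetric measurable $2$-cocycle on $X$ (weak mixing of $X^2$ and $X^3$, with $\pm1$ ambiguities absorbed by squaring --- this is where the absence of $2$-torsion in $H^2(G,\widehat X)$ is actually used), to invoke Moore's theorem that symmetric measurable $\mathbb T$-valued $2$-cocycles on a compact abelian group are coboundaries, and finally to upgrade the corrected, a.e.-multiplicative $t_g$ to genuine characters via Lemma~\ref{lemma1}. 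These ingredients are not formal consequences of the exact sequence plus superrigidity, and your outline supplies no substitute for them. (In the paper, superrigidity of $\sigma^4$ serves the separate purpose of making $\Psi_2:H^2(G,\mathbb T)\to H^2(G\curvearrowright X^2;\mathbb T)$ injective, via Lemma~\ref{lemma2}; your diagonal trick does replace that particular use, at the cost of the factor $2$ that torsion-freeness absorbs, but the core of the $n=2$ proof remains unaddressed.)
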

\begin{remark}
(a) Under the assumptions in the theorem, $H^n(G, \mathbb{T})$ is identified with a subgroup of $H^n(G\curvearrowright X;\mathbb{T})$ for $n=1, 2$ by the natural map described in Lemma \ref{lemma2} below.

(b) In the proof of the second embedding, we only need to assume $H^2(G, \widehat{X})$ has no non-trivial elements of order two (rather that being torsion free). Without the assumption that $H^2(G, \widehat{X})$ is torsion free as an abelian group, a small modification of our proof actually shows that the second embedding above still holds if we quotient out corresponding torsion subgroups from both sides, which was pointed out to us by Hanfeng Li.
\end{remark}

The paper is organized as follows. In Section \ref{section2}, we first recall the definitions of weakly mixing actions, $n$-cocycles and $n$th cohomology groups in the setting of group actions and modules, then we prove two lemmas; one is used to replace almost group homomorphisms with genuine ones; the other one justifies $H^n(G, \mathbb{T})$ being a natural subgroup of $H^n(G\curvearrowright X;\mathbb{T})$ for $n=1, 2$ under assumptions. In Section \ref{section3}, we prove Theorem \ref{maintheorem}. In Section \ref{section4}, we give applications of Theorem \ref{maintheorem} as mentioned in the abstract. We conclude with further questions related to this paper in Section \ref{section5}.
\\
\\
\noindent{\it Acknowledgments.}
We are indebted to Hanfeng Li for constant support and pointing out errors in a previous version of this paper. We thank Christian Rosendal for sharing his unpublished notes on almost group homomorphisms at an early stage of this paper. We also thank Jesse Peterson for his interest in this work, and thank Sorin Popa for helpful comments. Also thanks to Yves de Cornulier, Mike Mihalik, Boris Okun and Ben Wieland for answering our questions on $H^2(G, \mathbb{Z}G)$. Last but not least, we are grateful to the referee for extremely helpful suggestions and comments which improves the paper greatly.

\section{Preliminaries}\label{section2}
First, let us recall the well-known definition of weakly mixing actions.

\begin{definition}
A probability measure preserving action $G\curvearrowright (X, \mu)$ is said to be \textbf{weakly mixing} if one of the following equivalent conditions is satisfied.

\begin{itemize}
  \item The diagonal action $G\curvearrowright (X\times X, \mu\times \mu)$ is ergodic.
  \item The diagonal action $G\curvearrowright (X\times Y, \mu\times \eta)$ is ergodic for all ergodic $G\curvearrowright (Y, \eta)$.
  \item The constant functions form the only finite-dimensional $G$-invariant subspace of $L^2(X, \mu)$.
\end{itemize} 
Note that from the above definition, it follows that if $G\curvearrowright (X, \mu)$ is weakly mixing, then the diagonal $G$-action on the product $(X^n, \mu^n)$ is ergodic, and weakly mixing for all $n\geq 2$.

\end{definition} 

Next, let us recall the definition of cocycles in the group action setting \cite{FM}.

\begin{definition}
Let $G\curvearrowright(X,\mu)$ be a p.m.p. group action, we say a measurable map $c: G^n\times X\to\mathbb{T}$ is a $n$-cocycle if for all $g_1, g_2, \cdots,g_{n+1}\in G$, the equation
\begin{eqnarray*}
c(g_2, g_3, \dots, g_{n+1}, g_1^{-1}x)c(g_1g_2, g_3,\dots, g_{n+1}, x)^{-1}c(g_1, g_2g_3, g_4, \dots, g_{n+1}, x)\cdots\\
\cdots c(g_1, g_2, \dots, g_{n-1}, g_ng_{n+1}, x)^{(-1)^n}c(g_1, g_2, \dots, g_n, x)^{(-1)^{n+1}}=1
\end{eqnarray*}
holds true $\mu$-almost every $x\in X$.
\end{definition}

Since $G$ is countable, in the above definition, we can further assume there exists a conull subset $Z$ of $X$ such that the above equation holds true for all $ x\in Z$ and all $g_1, g_2, \cdots,g_{n+1}\in G$. Here, $Z\subset (X, \mu)$ is conull if $\mu(X\setminus Z)=0$.

\begin{definition}\label{cohomologous}
Two $n$-cocycles $c_1, c_2: G^n\times X\to\mathbb{T}$ are called cohomologous if there is a measurable map $b: G^{n-1}\times X\to\mathbb{T}$ such that for all $g_1, g_2, \cdots,g_{n}\in G$,
\begin{eqnarray*}
c_1(g_1,\dots, g_n, x)b(g_2,\dots, g_{n}, g_1^{-1}x)b(g_1g_2, g_3,\dots, g_{n}, x)^{-1}b(g_1, g_2g_3,\dots, g_{n}, x)\cdots\\
\cdots b(g_1,\dots, g_{n-2}, g_{n-1}g_{n}, x)^{(-1)^{n-1}}b(g_1, g_2,\dots, g_{n-1}, x)^{(-1)^{n}}=c_2(g_1,\dots, g_n, x)
\end{eqnarray*}
holds true $\mu$-almost every $x\in X$.
\end{definition}

\begin{definition}
We say a $n$-cocycle is trivial if it is cohomologous to the cocycle which takes the constant value $1\in\mathbb{T}$.
\end{definition}

The set of all $n$-cocycles for the action $G\curvearrowright(X,\mu)$ with values in $\mathbb{T}$ is denoted by $Z^n(G\curvearrowright X; \mathbb{T})$, the set of trivial cocycles is denoted by $B^n(G\curvearrowright X; \mathbb{T})$, and the set of equivalence class of cohomologous cocycles is denoted by $H^n(G\curvearrowright X; \mathbb{T})$. Note that since $\mathbb{T}$ is abelian, $Z^n(G\curvearrowright X; \mathbb{T})$ is an abelian group under pointwise multiplication, $B^n(G\curvearrowright X; \mathbb{T})$ is a subgroup and $H^n(G\curvearrowright X; \mathbb{T})$ is the quotient group, which is called the $n$th cohomology group for the action $G\curvearrowright(X,\mu)$.

Let $G$ be a group and $M$ be a left $G$-module, the cohomology group $H^n(G, M)$ for any natural number $n$ can also be defined using cocycles \cite[Page 59]{GTM_87}.

\begin{definition}
A map $c: G^n\to M$ is called a $n$-cocycle if the equation
\begin{eqnarray*}
g_1c(g_2, g_3,\dots, g_{n+1})-c(g_1g_2, g_3,\dots, g_{n+1})+c(g_1, g_2g_3, g_4,\dots, g_{n+1})-\cdots\\
\cdots +(-1)^nc(g_1, g_2, \dots, g_{n-1}, g_ng_{n+1})+(-1)^{n+1}c(g_1, g_2,\dots, g_n)=0
\end{eqnarray*}
holds true for all $g_1,\cdots, g_{n+1}$ in $G$.
\end{definition}

\begin{definition}
Two $n$-cocycles $c_1, c_2: G^n\to M$ are called cohomologous if there is a map $b: G^{n-1}\to M$ such that
\begin{eqnarray*}
c_1(g_1,\cdots, g_n)+g_1b(g_2,\cdots, g_{n})-b(g_1g_2, g_3,\dots, g_n)+b(g_1, g_2g_3,\dots, g_n)-\cdots\\
\cdots+(-1)^{n-1}b(g_1,\dots, g_{n-2}, g_{n-1}g_n)+(-1)^nb(g_1, g_2,\dots, g_{n-1})=c_2(g_1,\dots, g_n)
\end{eqnarray*}
holds true for all $g_1, g_2, \cdots,g_{n}\in G$.
\end{definition}

\begin{definition}
We say a $n$-cocycle is trivial if it is cohomologous to the cocycle which takes the constant value $0\in M$.
\end{definition}

In the same way, we define the $n$th cohomology group $H^n(G, M)$ for a $G$-module $M$ as the quotient group $Z^n(G, M)/B^n(G,M)$.
Especially, when $M=\mathbb{Z}G$ with left $G$-multiplication and $M=\mathbb{T}$ with trivial $G$-action, we can define $H^n(G, \mathbb{Z}G)$ and $H^n(G, \mathbb{T})$ respectively.

A general version of the following lemma was communicated to us by Christian Rosendal (see also \cite[Theorem 3]{Moore_1}). We are grateful to Hanfeng Li for the following quick proof.

\begin{lemma}\label{lemma1}
Let $X$ be a compact metrizable abelian group with Haar measure $\nu$, and let $b: X\to\mathbb{T}$ be a measurable map. If $b(x+y)=b(x)b(y)$ holds true for almost every $(x,y)\in X\times X$, then there is a continuous group homomorphism $\lambda: X\to \mathbb{T}$ such that $\lambda(x)=b(x)$ for almost every $x\in X.$
\end{lemma}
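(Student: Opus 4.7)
The plan is to prove this via Fourier analysis on the compact abelian group $X$, reducing the almost-homomorphism to literally a character of $X$. Since $b$ takes values in $\mathbb{T}$ we have $|b|\equiv 1$, so $b\in L^2(X,\nu)$ with $\|b\|_2=1$. Because $X$ is metrizable and compact abelian, its Pontryagin dual $\widehat{X}$ is countable, so $b$ admits a (countable) Fourier expansion $b=\sum_{\chi\in\widehat{X}}\hat{b}(\chi)\chi$ with $\hat{b}(\chi)=\int_X b(x)\overline{\chi(x)}\,d\nu(x)$, and Parseval gives $\sum_{\chi}|\hat{b}(\chi)|^2=1$, so at least one Fourier coefficient, say $\hat{b}(\chi_0)$, is nonzero.

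Next I would exploit the hypothesis via Fubini. The identity $b(x+y)=b(x)b(y)$ holds on a conull subset of $X\times X$, so for a.e.\ $y\in X$ it holds for a.e.\ $x\in X$. Fix such a good $y$ and take the $\chi$-th Fourier coefficient in $x$ of both sides: using translation invariance of $\nu$, the left side equals
\[
\int_X b(x+y)\,\overline{\chi(x)}\,d\nu(x)=\int_X b(x)\,\overline{\chi(x-y)}\,d\nu(x)=\chi(y)\hat{b}(\chi),
\]
while the right side equals $b(y)\hat{b}(\chi)$. Hence for a.e.\ $y\in X$ and for every $\chi\in\widehat{X}$ (countably many, so the null set is still null),
\[
\chi(y)\hat{b}(\chi)=b(y)\hat{b}(\chi).
\]

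Specializing to $\chi=\chi_0$ and dividing by $\hat{b}(\chi_0)\neq 0$ yields $b(y)=\chi_0(y)$ for a.e.\ $y\in X$. The character $\lambda:=\chi_0$ is a continuous group homomorphism $X\to\mathbb{T}$, so this finishes the proof. The only mild care needed is the Fubini step and the observation that $\widehat{X}$ is countable, which lets us interchange the a.e.\ quantifier on $y$ with the quantifier over all characters; there is no real obstacle since everything after that is a one-line Fourier computation.
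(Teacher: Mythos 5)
Your proof is correct, but it takes a different route from the paper's. The paper views $b$ as a unit vector in $L^2(X,\nu)$ and uses the hypothesis to show that $b$ is an eigenvector of the translation operator $\alpha_z$ for all $z$ in a conull set $Z$; since $Z+Z=X$ (two conull sets translate to intersect), $b$ is then an eigenvector for \emph{every} translation, and the eigenvalue map $\lambda(x)$, which agrees with $b$ on $Z$, is a continuous homomorphism because the translation representation on $L^2(X)$ is continuous. You instead extract the character directly by Fourier analysis: Parseval gives a nonzero coefficient $\hat{b}(\chi_0)$, and the Fubini-plus-change-of-variables computation $\chi(y)\hat{b}(\chi)=b(y)\hat{b}(\chi)$ forces $b=\chi_0$ a.e. Your version is arguably more self-contained, since the output $\chi_0\in\widehat{X}$ is automatically a continuous homomorphism and no extension-from-a-conull-set or continuity argument for $\lambda$ is needed; the paper's version is shorter to state and fits the representation-theoretic viewpoint used elsewhere in Section 2 (eigenvectors and invariant subspaces under weak mixing). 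One small remark: the countability of $\widehat{X}$ is not actually needed for your quantifier interchange --- once $y$ is a ``good'' point, the identity $b(x+y)=b(x)b(y)$ holds for a.e.\ $x$, so integrating against $\overline{\chi}$ is legitimate for every $\chi$ simultaneously; countability is only a harmless extra precaution.
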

\begin{proof}
Note that the translation action $\alpha: X\curvearrowright L^2(X)$ is a continuous group action, and $b\in L^2(X)$ with $||b||_2=1$.
By assumption, we can find a conull subset $Z$ of $X$ such that $\alpha_z(b)=b(z)b$ holds true for all $z\in Z$.
This implies $b$ is an eigenvector for $\alpha_x$ for all $x\in X$, i.e.   $\alpha_x(b)=\lambda(x)b$ for all $x\in X$ and some map $\lambda: X\to\mathbb{T}$. In particular, $\lambda(z)=b(z)$ for all $z\in Z$. It is easy to check that $\lambda: X\to\mathbb{T}$ satisfies the required properties.
\end{proof}
\begin{lemma}\label{lemma2}
For a p.m.p. action $G\curvearrowright (X,\nu)$, we have a natural group homomorphism $\Psi_n: H^n(G, \mathbb{T})\rightarrow H^n(G\curvearrowright X; \mathbb{T})$ for any natural number $n$. It is injective for $n=1$ if the action is weakly mixing; it is also injective for $n=2$ if the diagonal action $\sigma^2: G\curvearrowright (X^2, \nu^2)$ is weakly mixing and $\mathbb{T}$-cocycle superrigid.

\end{lemma}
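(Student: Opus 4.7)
The map $\Psi_n$ is the natural \emph{inflation}: given a group $n$-cocycle $c:G^n\to\mathbb{T}$ for the trivial $G$-action, set $\tilde c(g_1,\dots,g_n,x):=c(g_1,\dots,g_n)$, constant in $x$. A direct comparison of the two coboundary formulas shows that if $c=dB$ in group cohomology, then $\tilde c=\partial\tilde B$ in action cohomology for $\tilde B(g,x):=B(g)$, so $\Psi_n$ descends to a homomorphism $H^n(G,\mathbb{T})\to H^n(G\curvearrowright X;\mathbb{T})$.

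For $n=1$, a trivial-action $1$-cocycle is just a homomorphism and $B^1(G,\mathbb{T})=0$, so $H^1(G,\mathbb{T})=\mathrm{Hom}(G,\mathbb{T})$. If $\Psi_1([c])=0$ there exists a measurable $b:X\to\mathbb{T}$ with $c(g)\,b(g^{-1}x)=b(x)$ a.e., which makes $\mathbb{C}b$ a one-dimensional $G$-invariant subspace of $L^2(X,\nu)$. Weak mixing (via the finite-dimensional-subspace criterion) then forces $b$ to be essentially constant, whence $c\equiv 1$.

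For $n=2$ the vanishing hypothesis produces a measurable $b:G\times X\to\mathbb{T}$ satisfying the $c$-twisted $1$-cocycle identity
\[
b(g_1g_2,x)=c(g_1,g_2)\,b(g_2,g_1^{-1}x)\,b(g_1,x).
\]
The quotient $\beta(g,x,y):=b(g,x)/b(g,y)$ kills the scalar twist by $c$ and is a genuine $\mathbb{T}$-valued $1$-cocycle for $\sigma^2$. Cocycle superrigidity of $\sigma^2$ yields $\rho\in\mathrm{Hom}(G,\mathbb{T})$ and a measurable $H:X^2\to\mathbb{T}$ with
\[
\beta(g,x,y)=\rho(g)\,H(x,y)\,H(g^{-1}x,g^{-1}y)^{-1}.
\]
Two further algebraic identities will pin $H$ down. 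The transitivity relation $\beta(g,x,y)\beta(g,y,z)=\beta(g,x,z)$ forces $L(x,y,z):=H(x,y)H(y,z)/H(x,z)$ to be a $\rho$-eigenvector for $\sigma^3$; since weak mixing of $\sigma^2$ implies weak mixing of $\sigma$ and hence of $\sigma^3$, $L$ must be constant and $\rho\equiv 1$. The antisymmetry $\beta(g,x,y)\beta(g,y,x)=1$ combined with weak mixing of $\sigma^2$ similarly makes $J(x,y):=H(x,y)H(y,x)$ constant. Fixing a generic $y_0\in X$ and feeding the two factorisations against each other then yields $H(x,y)=C\,h(x)/h(y)$ for some constant $C\in\mathbb{T}$ and measurable $h:X\to\mathbb{T}$.

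With $H$ in this product form, the relation for $\beta$ becomes $b(g,x)h(g^{-1}x)/h(x)=b(g,y)h(g^{-1}y)/h(y)$ a.e., so the common value is independent of $x$ and defines a function $B:G\to\mathbb{T}$ with $b(g,x)=B(g)\,h(x)/h(g^{-1}x)$. Substituting back into the twisted cocycle identity and cancelling the $h$-factors yields $c(g_1,g_2)=B(g_1g_2)B(g_1)^{-1}B(g_2)^{-1}$, exhibiting $c$ as the group coboundary of $B^{-1}$. Hence $[c]=0$ in $H^2(G,\mathbb{T})$ and $\Psi_2$ is injective. The main obstacle is the rigidity step that extracts the product form $H(x,y)=C\,h(x)/h(y)$ from the raw superrigidity output via the weak-mixing identities on $L$ and $J$; once this is in hand, the remaining algebra is bookkeeping.
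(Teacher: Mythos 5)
Your proposal is correct, and it coincides with the paper's proof in its first half: the inflation map $\Psi_n$, the eigenfunction argument for $n=1$, and, for $n=2$, the passage from the coboundary witness $b$ to the genuine $1$-cocycle $\beta(g,x,y)=b(g,x)b(g,y)^{-1}$ for $\sigma^2$ followed by cocycle superrigidity are exactly the paper's steps (the paper writes $\hat b_g=b_g\otimes b_g^{*}$). Where you diverge is the key technical step of untwisting the transfer function: the paper shows the superrigidity output $t$ splits as a simple tensor $t(x,y)=\phi(x)\psi(y)$ by invoking Furman's relative weak mixing lemma for the morphism $(X^3,\nu^3)\to(X^2,\nu^2)$, $(x,y,z)\mapsto(x,z)$, and then finishes with a Hilbert--Schmidt operator computation to see that $b_g^{*}\phi\sigma_g(\phi^{*})$ is constant; you instead exploit the extra algebraic identities that $\beta$ inherits from being a ratio --- transitivity in $(x,y,z)$ and antisymmetry in $(x,y)$ --- to show that $L(x,y,z)=H(x,y)H(y,z)H(x,z)^{-1}$ and $J(x,y)=H(x,y)H(y,x)$ are eigenfunctions, hence constant by weak mixing of $\sigma^3$ and $\sigma^2$ (weak mixing of $\sigma^2$ does imply weak mixing of $\sigma$ and hence of $\sigma^3$, as you note), which simultaneously kills the homomorphism $\rho$ and, after evaluating at a generic $y_0$, yields $H(x,y)=Ch(x)/h(y)$ and the separation-of-variables step producing $B(g)$. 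This is a genuinely more elementary and self-contained route: it avoids importing Furman's relative weak mixing machinery and the Hilbert--Schmidt trick, at the cost of a few routine Fubini/conull-set manipulations (which you handle correctly, since $G$ is countable). The paper's route has the advantage of being the ``standard'' argument that generalizes beyond the ratio-cocycle situation, where the transitivity and antisymmetry identities you rely on are not available; your route is tailored to the specific form $\beta(g,x,y)=b(g,x)/b(g,y)$, which is all that is needed here. The sign conventions in your twisted cocycle identity differ from the paper's display (2.1) by replacing $b$ with $b^{-1}$, but this does not affect the conclusion that $[c]=0$.
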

\begin{proof}
We define $\Psi_n: H^n(G, \mathbb{T})\to H^n(G\curvearrowright X;\mathbb{T})$ by $\Psi_n([c])=[c']$, where $c': G^n\times X\to\mathbb{T}$ is defined by $c'(g_1,\cdots, g_n, x)=c(g_1,\cdots, g_n).$

It is easy to check that $\Psi_n$ is a well-defined group homomorphism.

For $n=1$, we claim $\Psi_1$ is injective if the action is weakly mixing.

Note that $H^1(G, \mathbb{T})=Hom(G, \mathbb{T})$.

If $[c]\in H^1(G, \mathbb{T})$ and $\Psi_1([c])=[0]$, then there exists a measurable map $b: X\to \mathbb{T}$ such that $c(g)=b(g^{-1}x)b(x)^{-1}$ holds a.e. $x\in X$. Now, viewing $b\in L^2(X)$, we have $gb=c(g)b$, so $\mathbb{C}b$ is an invariant subspace under the $G$-action. Now the action is weakly mixing implies $b(x)=\lambda\in\mathbb{T}$ a.e. $x\in X$, so $c(g)=1$ for all $g\in G$, and $[c]=[0]$.

For $n=2$, we claim $\Psi_2$ is injective if the diagonal action $\sigma^2: G\curvearrowright (X^2, \nu^2)$ is weakly mixing and $\mathbb{T}$-cocycle superrigid.

If $[c]\in H^2(G, \mathbb{T})$ and $\Psi_2([c])=[0]\in H^2(G\curvearrowright X;\mathbb{T})$, there exists a measurable map $b: G\times X\to \mathbb{T}$ such that
\begin{eqnarray}\label{2*}
c(g,h)=b(g,x)b(h,g^{-1}x)b(gh,x)^{-1}
\end{eqnarray}
holds a.e. $x\in X$.

For brevity, we use $b_g(x):=b(g, x)$ below.

Defining $\hat{b}: G\times X^2\to \mathbb{T}$ by $\hat{b}_g(x,y)=b_g(x)b_g(y)^{-1}$, then (\ref{2*}) implies $\hat{b}_g\sigma^2_g(\hat{b}_h)=\hat{b}_{gh}\in L^{\infty}(X^2, \mathbb{T})$, i.e.   $\hat{b}$ is a 1-cocyle for the diagonal action $\sigma^2: G\curvearrowright X^2$. Since it is a $\mathbb{T}$-cocycle superrigid action, we can find a group homomorphism $\lambda: G\to\mathbb{T}$ and a measurable map $t: X^2\to\mathbb{T}$ such that
\begin{eqnarray*}\label{21}
\hat{b}_g=\lambda_gt\sigma^2_g(t^*)
\end{eqnarray*}or, equivalently:
\begin{eqnarray}\label{22}
b_g(x)=b_g(y)\lambda_gt(x, y)t(g^{-1}x, g^{-1}y)^{-1}
\end{eqnarray}holds a.e. $(x, y)\in X^2.$

Now we claim $t$ splits as a simple tensor, i.e.   there exist measurable functions $\phi, \psi: X\to\mathbb{T}$ such that $t(x, y)=\phi(x)\psi(y)$ holds a.e. $(x, y)\in X^2$.

The proof is essentially the same as the proof of \cite[Theorem 3.4]{Furman}(for the ``absolute case"). We outline it for completeness.

From (\ref{22}), we also have
\begin{eqnarray}\label{23}
b_g(z)=b_g(y)\lambda_gt(z, y)t(g^{-1}z, g^{-1}y)^{-1}.
\end{eqnarray}

Substituting (\ref{22}) in (\ref{23}), we obtain the following identity which holds a.e. $(x, y, z)\in X^3$:
$$b_g(z)=[t(z, y)t(x, y)^{-1}]b_g(x)[t(g^{-1}z, g^{-1}y)t(g^{-1}x, g^{-1}y)^{-1}]^{-1}.$$

Setting $\Phi(x, y, z)\overset{\rm{def}}{=}t(z, y)t(x, y)^{-1}$, the above takes the form
$$b_g(z)=\Phi(x, y, z)b_g(x)\Phi(g^{-1}x, g^{-1}y, g^{-1}z)^{-1}$$
or, equivalently:
$$\Phi(g^{-1}x, g^{-1}y, g^{-1}z)=b_g(x)\Phi(x, y, z)b_g(z)^{-1}.$$
Next, observe that the morphism of the diagonal $G$-actions
$$q: (X^3, \nu^3)\longrightarrow (X^2, \nu^2), q(x, y, z)=(x, z)$$
is relative weakly mixing, as described in \cite[Page 25]{Furman}.

Following the notations in \cite{Furman}, we observe that $\mathbb{T}\in \mathcal{U}_{\rm{fin}}\subset\mathcal{G}_{\rm{inv}}$.  Since the ``cocycle" assumption in the statement of \cite[Lemma 3.2]{Furman} is not used in its proof, we can still apply \cite[Lemma 3.2]{Furman} for the measurable maps $\alpha, \beta: G\times X^2\to \mathbb{T}$ given by $\alpha(g, x, z):=b_g(x), \beta(g, x, z):=b_g(z)$ to conclude that $\Phi(x, y, z)=f(x, z)$, for some measurable map $f: X^2\to\mathbb{T}$. Therefore
$$t(z, y)t(x, y)^{-1}=\Phi(x, y, z)=f(x, z)$$
meaning that a.e. $(x, y, z)\in X^3$, we have
$$t(z, y)=f(x, z)t(x, y).$$
Then there exists some $z_0$, such that $$t(x, y)=t(z_0, y)f(x, z_0)^{-1}$$ holds a.e. $(x, y)\in X^2$.

After defining $\psi(y)=t(z_0, y), \phi(x)=f(x, z_0)^{-1}$, we see that $t(x, y)=\phi(x)\psi(y)$ holds a.e. $(x, y)\in X^2$.

The definition of $\hat{b}$ and (\ref{22}) imply $b_g\otimes b_g^*=\lambda_g(\phi\otimes\psi)(\sigma_g(\phi^*)\otimes\sigma_g(\psi^*))\in L^{\infty}(X)\otimes L^{\infty}(X)$ or, equivalently:
\begin{eqnarray}\label{24}
1\otimes 1=\lambda_g(b_g^*\phi\sigma_g(\phi^*)\otimes b_g\psi\sigma_g(\psi^*)).
\end{eqnarray}
Since we can identify $L^{\infty}(X)\otimes L^{\infty}(X)$ with a subspace of $L^{2}(X)\otimes \overline{L^{2}(X)}$, we may think $1\otimes 1,  b_g^*\phi\sigma_g(\phi^*)\otimes b_g\psi\sigma_g(\psi^*)$ as Hilbert-Schmidt operators on $L^2(X)$. Then, take the vector $1\in L^2(X)$, and apply (\ref{24}), we have 
\begin{eqnarray*}
1=\langle 1,1\rangle 1=(1\otimes 1)1&=&\lambda_gb_g^*\phi\sigma_g(\phi^*)\otimes b_g\psi\sigma_g(\psi^*)(1)\\
&=&\lambda_g\langle 1, b_g^*\psi^*\sigma_g(\psi)\rangle b_g^*\phi\sigma_g(\phi^*)\in L^2(X).
\end{eqnarray*}
The above implies $b_g^*\phi\sigma_g(\phi^*)$ is a constant function in $L^2(X)$, say, $b_g^*\phi\sigma_g(\phi^*)=\mu_g$ for some map $\mu: G\to\mathbb{T}$; hence, $b_g=\phi\sigma_g(\phi^*)\mu_g^{-1}$, substituting this in (\ref{2*}), we deduce $c(g,h)=\mu_g^{-1}\mu_h^{-1}\mu_{gh}$, so $[c]=[0]$.
\end{proof}

\begin{remark}
If $G$ has property (T) and the action $G\curvearrowright X$ is a Bernoulli shift, then it is well-known that $H^2(G, \mathbb{T})$ is a subgroup of $H^2(G\curvearrowright X;\mathbb{T})$ by \cite[Theorem 3.1]{Sysak-Popa}.
\end{remark}
\begin{remark}
If the diagonal action $\sigma^2: G\curvearrowright (X^2, \nu^2)$ is weakly mixing and $\mathbb{T}$-cocycle superrigid, then $\sigma: G\curvearrowright (X, \nu)$ is also $\mathbb{T}$-cocycle superrigid by \cite[Theorem 3.1]{Popa_3}.
\end{remark}

\section{Proof of Theorem \ref{maintheorem}}\label{section3}

Now, we are ready to give the proof of Theorem \ref{maintheorem}.

\begin{proof}
We may identify $H^n(G, \mathbb{T})$ with a subgroup of $H^n(G\curvearrowright X;\mathbb{T})$ for $n=1, 2$ by Lemma \ref{lemma2}. Then we can define a group homomorphism
$$\Phi_n: H^n(G, \widehat{X})\to\frac{H^n(G\curvearrowright X;\mathbb{T})}{H^n(G, \mathbb{T})}$$
by $\Phi_n([c])=[\tilde{c}]+H^n(G, \mathbb{T})$, where $\tilde{c}: G^n\times X\to\mathbb{T}$ is defined by $\tilde{c}(g_1,\cdots, g_n, x)=c(g_1,\cdots, g_n)(x)$ for all $g_1, \cdots, g_n\in G$ and $x\in X$.

It is clear that $\Phi_n$ is a well-defined group homomorphism.

\textbf{Case $n=1$}.

We claim $\Phi_1$ is injective if the action $G\curvearrowright X$ is weakly mixing.

Note that $H^1(G, \mathbb{T})=Hom(G, \mathbb{T})$.

If $[c]\in H^1(G,\widehat{X})$ and $\Phi_1([c])=[0]+H^1(G, \mathbb{T})$, then there exist a measurable map $b: X\to \mathbb{T}$ and a group homomorphism $\phi: G\to \mathbb{T}$ such that
\begin{eqnarray}\label{eq1}
c(g)(x)=\tilde{c}(g,x)=b(g^{-1}x)b(x)^{-1}\phi(g)
\end{eqnarray}
holds true  for all $g\in G$ and all $x\in Z\subset X$ a conull set.

Then we prove that we can assume $b$ is a continuous group homomorphism and $\phi(g)=1\in\mathbb{T}$ for all $g\in G$.

Note that the addition operation $\Delta: X\times X\to X, \Delta(x,y):=x+y$, induces a left invariant measure $\Delta_*(\mu\times\mu)$ on $X$ and hence $\Delta_*(\mu\times\mu)=\mu$ by the uniqueness of Haar measure on $X$.
So $W:=\Delta^{-1}(Z)\cap (Z\times Z)\subset X\times X$ is still a conull set and we may assume it is $G$-invariant.

Since $c(g)(x+y)=c(g)(x)c(g)(y)$ for all $g\in G$ and all $x, y\in X$, by (\ref{eq1}) we deduce that
\begin{eqnarray*}\label{eq2}
b(x+y)b(x)^{-1}b(y)^{-1}\phi(g)=(gb)(x+y)[(gb)(x)]^{-1}[(gb)(y)]^{-1}
\end{eqnarray*}
holds true for all $g\in G$ and all $(x,y)\in W\subset X\times X$.

Now we consider the map $\hat{b}: X^2\to \mathbb{T}$ defined by $\hat{b}(x,y)=b(x+y)b(x)^{-1}b(y)^{-1}$, the above equality implies
\begin{eqnarray}\label{eq3}
\hat{b}(x,y)\phi(g)=(g\hat{b})(x,y)
\end{eqnarray}
holds true for all $g\in G$ and all $(x,y)\in W\subset X\times X$.

Notice that the above implies the subspace $\mathbb{C}\hat{b}$ is invariant under the action of $G$. Since $G\curvearrowright X^2$ is weakly mixing, it follows that $\hat{b}$ is a constant function in $L^2(X^2)$.

Hence, $\hat{b}(x,y)\equiv \lambda$ a.e. for some constant $\lambda\in\mathbb{T}$, i.e. $b(x+y)=b(x)b(y)\lambda$ for all $(x,y)\in Z'\subset X\times X$ a conull set.

From (\ref{eq3}), we find $\phi(g)=1\in \mathbb{T}$ for all $g\in G$.

After replacing $b$ by $b\lambda$, which does not change the equality in (\ref{eq1}), we may assume $b$ is an almost group homomorphism in the sense that $b(x+y)=b(x)b(y)$ for all $(x,y)\in Z'\subset X\times X$ a conull set.

Now, Lemma \ref{lemma1} shows that there is a continuous genuine group homomorphism $\tilde{b}: X\to \mathbb{T}$ such that $\tilde{b}(x)=b(x)$  for all $x\in Y\subset X$ a conull set.

So, for any fixed $g\in G$, $c(g)(x)=\tilde{b}(g^{-1}x)\tilde{b}(x)^{-1}$ holds for all $x$ in a conull subset of $X$. Since both sides are continuous functions on $x$, the equality holds everywhere. Hence $c(g)=(g-1)\tilde{b}\in \widehat{X}$, $c$ is a trivial 1-cocycle.

\textbf{Case $n=2$.}

We claim $\Phi_2$ is injective if the diagonal actions $G\curvearrowright X^2, X^4$ are weakly mixing and $\mathbb{T}$-cocycle superrigid.

If $[c]\in H^2(G,\widehat{X})$ and $\Phi_2([c])=[0]+H^2(G, \mathbb{T})$, then there is a measurable map $t:G\times X\to \mathbb{T}$ and a 2-cocycle $\theta: G\times G\to \mathbb{T}$ such that
\begin{eqnarray}\label{eq4}
c(g,h)(x)=\theta(g,h)t(g,x)t(h,g^{-1}x)t(gh,x)^{-1}
\end{eqnarray}
holds true for all $g,h\in G$ and almost every $x\in X$.

For brevity, we use $t_g(x):=t(g, x)$ below.

Now we show that $\theta\in B^2(G,\mathbb{T})$ and $t$ can be replaced by a map from $G$ to $\widehat{X}$.

Since $c(g,h)\in \widehat{X}$, we have the following equality
\begin{eqnarray*}\label{eq5}
t_{gh}(x+y)t_{gh}(x)^{-1}t_{gh}(y)^{-1}\theta(g, h)=
t_g(x+y)t_g(x)^{-1}t_g(y)^{-1}t_h(g^{-1}(x+y))t_h(g^{-1}x)^{-1}t_h(g^{-1}y)^{-1}
\end{eqnarray*}
holds true for all $g,h\in G$ and almost every $(x,y)\in X\times X$.

Defining $\hat{t}: G\times X^2\to\mathbb{T}$ by $\hat{t}_g(x,y)=t_g(x+y)t_g(x)^{-1}t_g(y)^{-1}$, then the above becomes
\begin{eqnarray*}\label{eq6}
\theta(g, h)=\hat{t}_g(x,y)\hat{t}_h(g^{-1}x, g^{-1}y)\hat{t}_{gh}(x,y)^{-1}.
\end{eqnarray*}
Therefore, $\Psi_2([\theta])=[0]$ under the homomorphism $\Psi_2: H^2(G, \mathbb{T})\to H^2(G\curvearrowright X^2; \mathbb{T})$.
Now, since we assume the diagonal action $G\curvearrowright X^4$ is weakly mixing and $\mathbb{T}$-cocycle superrigid, $\Psi_2$ is injective by Lemma \ref{lemma2}, hence $[\theta]=[0]\in H^2(G,\mathbb{T})$.

Now, we may assume $\theta(g,h)=1$ in (\ref{eq4}) and find a new expression
\begin{eqnarray}\label{eq4'}
c(g,h)(x)=t_g(x)t_h(g^{-1}x)t_{gh}(x)^{-1},
\end{eqnarray}
which holds true for all $g,h\in G$ and almost every $x\in X$.

Repeating the above argument, we would conclude
\begin{eqnarray*}\label{eq7}
\hat{t}_g(x,y)\hat{t}_h(g^{-1}x, g^{-1}y)=\hat{t}_{gh}(x,y).
\end{eqnarray*}
Hence, $\hat{t}\in Z^1(G\curvearrowright X^2;\mathbb{T})$. Then we apply the assumption that $G\curvearrowright X^2$ is $\mathbb{T}$-cocycle superrigid to find a measurable map $u: X^2\to \mathbb{T}$ and a group homomorphism $\lambda: G\to\mathbb{T}$ such that
\begin{eqnarray*}
\hat{t}_g=\lambda_gu\sigma^2_g(u^*)
\end{eqnarray*}
holds for all $g\in G$. This is equivalent to say
\begin{eqnarray}\label{eq8}
t_g(x_1+x_2)=t_g(x_1)t_g(x_2)\lambda_gu(x_1, x_2)u(g^{-1}x_1, g^{-1}x_2)^{-1},
\end{eqnarray}
holds for all $(x_1, x_2)\in E\subset X^2$, where $E\subset X^2$ is a conull set, and all $g\in G$.

Note that we may replace $E$ by the smaller conull subset $\cap_{\tau\in S_2}\tau(E)$, where $S_2$ denotes the permutation group of two elements and $(x_1, x_2)\in\tau(E)\overset{\rm{def}}{\Longleftrightarrow}(x_{\tau(1)}, x_{\tau(2)})\in E$, so without loss of generality, we may assume $E$ is symmetric.

Now we claim that we can assume $u$ is a measurable symmetric 2-cocycle, i.e. there exist conull sets $F''\subset X^3$ and $E''\subset X^2$ such that the following hold:
\begin{eqnarray*}
u(x_1, x_2)u(x_1+x_2, x_3)&=&u(x_2, x_3)u(x_1, x_2+x_3),~~~~ \text{for all $(x_1, x_2, x_3)\in F''$};\\
u(x_1, x_2)&=&u(x_2, x_1),~~~~ \text{for all $(x_1, x_2)\in E''$}.
\end{eqnarray*}

First, we show $u$ can be assumed to be symmetric.

Given any $(x_1, x_2)\in E$, we have the following equality by (\ref{eq8})
\begin{eqnarray*}\label{eq8_2}
t_g(x_1+x_2)=t_g(x_1)t_g(x_2)\lambda_gu(x_2, x_1)u(g^{-1}x_2, g^{-1}x_1)^{-1}.
\end{eqnarray*}
Comparing it with (\ref{eq8}), we deduce
\begin{eqnarray*}
u(x_1, x_2)u(x_2, x_1)^{-1}=u(g^{-1}x_1, g^{-1}x_2)u(g^{-1}x_2, g^{-1}x_1)^{-1}.
\end{eqnarray*}
Since $G\curvearrowright X^2$ is weakly mixing, $u(x_1, x_2)=au(x_2, x_1)$ for some constant $a\in\mathbb{T}$ and for all $(x_1, x_2)\in E'\subset X^2$ a conull set.

Defining $E''=\cap_{\tau\in S_2}\tau(E\cap E')$, then $E''$ is still conull.

Note that $E''$ is symmetric; hence $u(x_1, x_2)=au(x_2, x_1)=a^2u(x_1, x_2)$, $a=\pm1$.

If $a=-1$, $u(x_1, x_2)^2=u(x_2, x_1)^2$, and we may first study 2-cocycle $c^2$ (and hence $t_g^2, \lambda_g^2, u^2$ respectively in (\ref{eq8})), and continue the proof below. In the end, we could show $[c^4]=[0]$, but this implies $[c]=[0]$ since $H^2(G, \widehat{X})$ is torsion free, so without loss of generality, we may assume $u(x_1, x_2)=u(x_2, x_1)$ for all $(x_1, x_2)\in E''$, i.e.   $u$ is symmetric.

Second, we show that we can assume $u$ satisfies the 2-cocycle identity.

Defining $F\subset X^3$ by $F=\phi^{-1}(E'')\cap\psi^{-1}(E'')\cap p^{-1}(E'')$, where $\phi, \psi, p: X^3\to X^2$ are defined by $\phi(x_1, x_2, x_3)=(x_1, x_2+x_3), \psi(x_1, x_2, x_3)=(x_2+x_3, x_1)$ and $p(x_1, x_2, x_3)=(x_2, x_3)$ respectively.

Note that $F$ is a measurable conull subset of $X^3$ and we can further assume $F=\tau(F)$ for all $\tau\in S_3$, i.e.   $(x_1, x_2, x_3)\in F$ if and only if $(x_{\tau(1)}, x_{\tau(2)}, x_{\tau(3)})\in F$.

Given any $(x_1, x_2, x_3)\in F$, applying (\ref{eq8}) to $(x_1+x_2, x_3)$, we deduce the following identity
\begin{equation}\label{eq9}
t_g(x_1+x_2+x_3)=t_g(x_1+x_2)t_g(x_3)\lambda_gu(x_1+x_2, x_3)u(g^{-1}(x_1+x_2), g^{-1}x_3)^{-1}.
\end{equation}
Substituting $t_g(x_1+x_2)$ in (\ref{eq9}), via the formula (\ref{eq8}), we further deduce
\begin{equation}\label{eq10}
\frac{t_g(x_1+x_2+x_3)}{t_g(x_1)t_g(x_2)t_g(x_3)\lambda_g^2}=u(x_1, x_2)u(g^{-1}x_1, g^{-1}x_2)^{-1}u(x_1+x_2, x_3)u(g^{-1}(x_1+x_2), g^{-1}x_3)^{-1}.
\end{equation}

Similarly, we could deduce the following
\begin{equation}\label{eq11}
\frac{t_g(x_1+x_2+x_3)}{t_g(x_1)t_g(x_2)t_g(x_3)\lambda_g^2}=u(x_1, x_3)u(g^{-1}x_1, g^{-1}x_3)^{-1}u(x_1+x_3, x_2)u(g^{-1}(x_1+x_3), g^{-1}x_2)^{-1}.
\end{equation}

Now, combining (\ref{eq10}) and (\ref{eq11}), we deduce the following equality,
\begin{eqnarray*}
\frac{u(x_1, x_2)u(x_1+x_2, x_3)}{u(x_1, x_3)u(x_1+x_3, x_2)}=\frac{u(g^{-1}x_1, g^{-1}x_2)u(g^{-1}(x_1+x_2), g^{-1}x_3)}{u(g^{-1}x_1, g^{-1}x_3)u(g^{-1}(x_1+x_3), g^{-1}x_2)}.
\end{eqnarray*}
Since $G\curvearrowright X^3$ is weakly mixing, we deduce that
\begin{eqnarray}\label{eq12}
\frac{u(x_1, x_2)u(x_1+x_2, x_3)}{u(x_1, x_3)u(x_1+x_3, x_2)}=a'
\end{eqnarray}
for some constant $a'\in\mathbb{T}$ and all $(x_1, x_2, x_3)\in F'\subset X^3$, where $F'$ is a conull subset of $X^3$.

Defining $F''=\cap_{\tau\in S_3}\tau(F\cap F')$, then $F''$ is still conull.

Swapping $x_2$ and $x_3$ in (\ref{eq12}), we also have $\frac{u(x_1, x_3)u(x_1+x_3, x_2)}{u(x_1, x_2)u(x_1+x_2, x_3)}=a'$, hence $a'^2=1$, $a'=\pm1$. If $a'=-1$, we study $u^2$ and $c^2$ first as before, so without loss of generality, we may assume $a'=1$;

Now, we have shown that $u(x_1, x_2)u(x_1+x_2, x_3)=u(x_1, x_3)u(x_1+x_3, x_2)$ for all $(x_1, x_2, x_3)\in F''$.

Since $u$ is symmetric and $X$ is abelian, we can also rewrite it
as $$u(x_2, x_1)u(x_2+x_1, x_3)=u(x_1, x_3)u(x_2, x_1+x_3)$$ or, equivalently: $$u(x_1, x_2)u(x_1+x_2, x_3)=u(x_2, x_3)u(x_1, x_2+x_3)$$ for all $(x_1, x_2, x_3)\in F''$.

From above, we have shown that $u$ can be assumed to be a measurable symmetric 2-cocycle. Since $X$ is a compact separable abelian group, any such a 2-cocycle is a (measurable) 2-coboundary (see e.g. \cite[Theorem 10]{Moore_1} and \cite[page 39]{Moore_2} or see \cite[line -4 on page 252]{Neshveyev_Stormer}), i.e.   there exists a measurable map $v: X\to\mathbb{T}$ such that
$u(x_1, x_2)=v(x_1)v(x_2)v(x_1+x_2)^{-1}$ for a.e. $(x_1, x_2)\in X^2$.

Notice that (\ref{eq8}) can be written as
\begin{equation*}
t_g(x_1+x_2)=t_g(x_1)t_g(x_2)\lambda_gv(x_1)v(x_2)v(x_1+x_2)^{-1}[v(g^{-1}x_1)v(g^{-1}x_2)v(g^{-1}x_1+g^{-1}x_2)^{-1}]^{-1}
\end{equation*} or, equivalently:
\begin{eqnarray*}
[t_g(x_1)v(x_1)v(g^{-1}x_1)^{-1}][t_g(x_2)v(x_2)v(g^{-1}x_2)^{-1}]=\lambda_g^{-1}[t_g(x_1+x_2)v(x_1+x_2)v(g^{-1}x_1+g^{-1}x_2)^{-1}].
\end{eqnarray*}
Then after replacing $t_g$ by $t_gv\sigma_g(v^*)\lambda_g$, which does not change the equality in (\ref{eq4'}), and applying Lemma \ref{lemma1}, we may assume $t_g\in \widehat{X}$ for all $g\in G$; hence (\ref{eq4'}) holds for all $x\in X, g, h\in G$ with $t_g\in \widehat{X}$, $[c]=[0]$.

\end{proof}

\begin{remark}
The above proof (for $n=2$) starting from (\ref{eq8}) is similar to step 5 in the proof of \cite[Theorem 8.2]{Ioana_Popa_Vaes}.
\end{remark}

\section{Applications}\label{section4}

Now, we give some applications of our theorem.

We denote by $[k]$ a finite set with $k$ points, and denote by $\nu_k$ the uniformly distributed measure on $[k]$.

\begin{corollary}\label{cor1}
If $G\in \mathcal{G}$, e.g. $G$ has Property (T), then $H^2(G, \mathbb{Z}G)$ is torsion free as an abelian group.
\end{corollary}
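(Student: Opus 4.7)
The plan is to reduce the torsion-freeness of $H^2(G,\mathbb{Z}G)$ to a vanishing statement about the first cohomology of quotient modules, and then attack that vanishing statement by combining Theorem \ref{maintheorem} with Popa's cocycle superrigidity for Bernoulli shifts (which is exactly what membership in $\mathcal{G}$ provides).

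Fix an integer $n\geq 2$ and consider the short exact sequence of left $\mathbb{Z}G$-modules
\begin{equation*}
0\longrightarrow \mathbb{Z}G\xrightarrow{\;\cdot n\;}\mathbb{Z}G\longrightarrow (\mathbb{Z}/n\mathbb{Z})G\longrightarrow 0.
\end{equation*}
The induced long exact sequence in group cohomology contains the piece
\begin{equation*}
H^1(G,(\mathbb{Z}/n\mathbb{Z})G)\longrightarrow H^2(G,\mathbb{Z}G)\xrightarrow{\;\cdot n\;} H^2(G,\mathbb{Z}G),
\end{equation*}
so the $n$-torsion subgroup of $H^2(G,\mathbb{Z}G)$ is contained in the image of $H^1(G,(\mathbb{Z}/n\mathbb{Z})G)$. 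Hence it suffices to show that $H^1(G,(\mathbb{Z}/n\mathbb{Z})G)=0$ for every $n\geq 2$.

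To do this, identify $(\mathbb{Z}/n\mathbb{Z})G$ with the Pontryagin dual of the compact abelian group $X_n:=\prod_{g\in G}\mathbb{Z}/n\mathbb{Z}$ equipped with the Bernoulli shift action $\sigma_n$ of $G$. Since $G$ is infinite, $\sigma_n$ is mixing, hence weakly mixing, so Theorem \ref{maintheorem} applies in the case $n=1$ and yields an embedding
\begin{equation*}
H^1(G,(\mathbb{Z}/n\mathbb{Z})G)\hookrightarrow \frac{H^1(G\curvearrowright X_n;\mathbb{T})}{H^1(G,\mathbb{T})}.
\end{equation*}
Now use the assumption $G\in\mathcal{G}$: by definition, the Bernoulli shift $\sigma_n$ is $\mathbb{T}$-cocycle superrigid, so every $\mathbb{T}$-valued $1$-cocycle for $\sigma_n$ is cohomologous to a homomorphism $G\to\mathbb{T}$. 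This precisely says that the natural map $H^1(G,\mathbb{T})\to H^1(G\curvearrowright X_n;\mathbb{T})$ is surjective, and therefore the right-hand side of the embedding above is the zero group. Consequently $H^1(G,(\mathbb{Z}/n\mathbb{Z})G)=0$, which shows that multiplication by $n$ is injective on $H^2(G,\mathbb{Z}G)$ for every $n\geq 2$, i.e. $H^2(G,\mathbb{Z}G)$ is torsion free.

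There is essentially no obstacle once the theorem is granted: the only bookkeeping point is that membership of $G$ in $\mathcal{G}$ really does give superrigidity of the Bernoulli shift with base $\mathbb{Z}/n\mathbb{Z}$ (which is fine because the definition of $\mathcal{G}$ explicitly allows an arbitrary standard probability base), and that Lemma \ref{lemma2} ensures $H^1(G,\mathbb{T})$ injects into $H^1(G\curvearrowright X_n;\mathbb{T})$, so the quotient is genuinely trivial rather than merely being annihilated.
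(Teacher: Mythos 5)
Your proposal is correct and follows essentially the same route as the paper: identify $\mathbb{Z}G/k\mathbb{Z}G$ with the dual of the Bernoulli shift over a $k$-point base, use Theorem \ref{maintheorem} (case $n=1$) together with the $\mathbb{T}$-cocycle superrigidity furnished by $G\in\mathcal{G}$ to get $H^1(G,\mathbb{Z}G/k\mathbb{Z}G)=0$, and then conclude via the long exact sequence of $0\to\mathbb{Z}G\xrightarrow{\cdot k}\mathbb{Z}G\to\mathbb{Z}G/k\mathbb{Z}G\to 0$ that multiplication by $k$ is injective on $H^2(G,\mathbb{Z}G)$. The only cosmetic difference is that you spell out the relevant segment of the long exact sequence where the paper cites it from Brown's book.
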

\begin{proof}
For any $k\geq 2$, we take $f=k\in \mathbb{Z}G$, then note that $G\curvearrowright X_f=\widehat{\frac{\mathbb{Z}G}{\mathbb{Z}Gf}}\cong [k]^G$ is a Bernoulli shift, which is mixing. Notice that $G\curvearrowright ([k]^G,\nu_k^G)$ is $\mathbb{T}$-valued cocycle superrigid implies $H^1(G\curvearrowright [k]^G; \mathbb{T})=H^1(G, \mathbb{T})$, then applying Theorem \ref{maintheorem}, we deduce
$H^1(G, \frac{\mathbb{Z}G}{\mathbb{Z}Gf})=0$.

Note that we have a short exact sequence $0\to\mathbb{Z}Gk\to\mathbb{Z}G\to\frac{\mathbb{Z}G}{\mathbb{Z}Gk}\to 0$ of $G$-modules and $\mathbb{Z}Gf\cong\mathbb{Z}G$ whenever $f\in\mathbb{Z}G$ is not a right zero-divisor. By the well-known fact in homological algebra \cite[Chapter III, Proposition 6.1]{GTM_87}, we know the natural map $H^2(G, \mathbb{Z}G)\overset{\times k}{\rightarrow} H^2(G, \mathbb{Z}G)$ is injective for any positive integer $k$, hence $H^2(G, \mathbb{Z}G)$ is torsion free.
\end{proof}
\begin{remark}
We do not know a direct proof of this corollary when $G$ has Property (T) without using Popa's cocycle superrigidity theorem \cite{Popa_3}. $H^2(G, \mathbb{Z}G)$ is torsion free for any finitely presented group $G$ \cite[Proposition 13.7.1]{GTM_243}, but a group $G$ with (T) might not be finitely presented, e.g.  $G=SL_3(\mathbb{F}_p[X])$ \cite[Section 3.4]{Kazhdan_T}.
\end{remark}

\begin{corollary}\label{cor2}
Considering the Bernoulli shift action $G\curvearrowright (\mathbb{T}^G, \nu^G)$ or $G\curvearrowright ([k]^G, {\nu_k}^G)$ with $G\in\mathcal{G}$, where $\nu$ is the Haar measure on $\mathbb{T}$, the following are true.
\begin{enumerate}
  \item $H^n(G\curvearrowright \mathbb{T}^G;\mathbb{T})=H^n(G, \mathbb{T})$ implies $H^n(G, \mathbb{Z}G)=0$ for $n=1, 2$.
  \item $H^1(G\curvearrowright [k]^G;\mathbb{T})=H^1(G, \mathbb{T})$ implies $H^1(G, \frac{\mathbb{Z}G}{\mathbb{Z}Gk})=0$.
  \item $H^2(G, \frac{\mathbb{Z}G}{\mathbb{Z}Gk})=0$ implies $H^2(G,\mathbb{Z}G)$ is $k$-divisible, i.e.   the multiplication by $k$ map on $H^2(G,\mathbb{Z}G)$ is surjective.
  \item $H^1(G, \frac{\mathbb{Z}G}{\mathbb{Z}Gk})=0$ implies $H^{2}(G,\mathbb{Z}G)$ has no (non-trivial) elements with order $k$.
\end{enumerate}
\end{corollary}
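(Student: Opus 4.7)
The plan is to deduce all four implications from Theorem~\ref{maintheorem} together with the standard change-of-coefficients long exact sequence. The key module-theoretic identifications are: the Bernoulli shift $G\curvearrowright(\mathbb{T}^G,\nu^G)$ is the algebraic $G$-action dual to the left $\mathbb{Z}G$-module $\widehat{\mathbb{T}^G}=\bigoplus_G\mathbb{Z}=\mathbb{Z}G$, while $G\curvearrowright([k]^G,\nu_k^G)$ corresponds (as in Corollary~\ref{cor1}) to $\widehat{[k]^G}=\mathbb{Z}G/\mathbb{Z}Gk$. Both actions are mixing, and the diagonal powers $\sigma^m$ of a Bernoulli shift are themselves Bernoulli shifts (with base $(\mathbb{T}^m,\nu^m)$ or $([k]^m,\nu_k^m)$), hence $\mathbb{T}$-cocycle superrigid by the hypothesis $G\in\mathcal{G}$.

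For (1), I would plug $\widehat{X}=\mathbb{Z}G$ into Theorem~\ref{maintheorem}: the $n=1$ case requires only weak mixing, and the $n=2$ case additionally requires superrigidity of $\sigma^2,\sigma^4$ (just observed) together with torsion-freeness of $H^2(G,\mathbb{Z}G)$, which is precisely Corollary~\ref{cor1}. The resulting embedding $H^n(G,\mathbb{Z}G)\hookrightarrow H^n(G\curvearrowright\mathbb{T}^G;\mathbb{T})/H^n(G,\mathbb{T})$ turns the hypothesis of (1) into $H^n(G,\mathbb{Z}G)=0$ for $n=1,2$. For (2), the same recipe with $\widehat{X}=\mathbb{Z}G/\mathbb{Z}Gk$ at level $n=1$ (where no torsion-free assumption is needed) gives the statement immediately.

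For (3) and (4), the plan is purely homological: apply $H^*(G,-)$ to the short exact sequence $0\to\mathbb{Z}G\xrightarrow{\times k}\mathbb{Z}G\to\mathbb{Z}G/\mathbb{Z}Gk\to 0$ of $\mathbb{Z}G$-modules, obtaining the long exact segment
$$H^1(G,\mathbb{Z}G/\mathbb{Z}Gk)\longrightarrow H^2(G,\mathbb{Z}G)\xrightarrow{\ \times k\ } H^2(G,\mathbb{Z}G)\longrightarrow H^2(G,\mathbb{Z}G/\mathbb{Z}Gk).$$
Vanishing of the right-hand term forces the middle map to be surjective ($k$-divisibility, giving (3)); vanishing of the left-hand term forces it to be injective (absence of non-trivial $k$-torsion, giving (4)). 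The only substantive work is bookkeeping---confirming the dual-module identifications and checking that all hypotheses of Theorem~\ref{maintheorem} are simultaneously available for Bernoulli shifts over $G\in\mathcal{G}$---so I do not anticipate a genuine obstacle. The corollary is essentially a packaging of Theorem~\ref{maintheorem} together with the change-of-coefficients long exact sequence, and indeed (4) recovers the very mechanism used inside the proof of Corollary~\ref{cor1}.
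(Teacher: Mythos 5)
Your proposal is correct and follows essentially the same route as the paper: parts (1) and (2) are obtained by feeding $\widehat{X}=\mathbb{Z}G$ (using Corollary \ref{cor1} for the torsion-freeness needed at $n=2$) and $\widehat{X}=\mathbb{Z}G/\mathbb{Z}Gk$ into Theorem \ref{maintheorem}, while (3) and (4) come from the long exact sequence associated to $0\to\mathbb{Z}G\xrightarrow{\times k}\mathbb{Z}G\to\mathbb{Z}G/\mathbb{Z}Gk\to 0$, exactly as in the paper's proof (which refers back to the argument of Corollary \ref{cor1}). Your extra bookkeeping---identifying $\widehat{\mathbb{T}^G}\cong\mathbb{Z}G$, $\widehat{[k]^G}\cong\mathbb{Z}G/\mathbb{Z}Gk$, and noting that diagonal powers of Bernoulli shifts are again Bernoulli and hence $\mathbb{T}$-cocycle superrigid for $G\in\mathcal{G}$---is precisely what the paper leaves implicit, so there is no gap.
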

\begin{proof}
Let $\widehat{X}=\mathbb{Z}G$, by Corollary \ref{cor1}, we can apply Theorem \ref{maintheorem} for $n=1, 2$ to see (1) holds.

Let $\widehat{X}=\frac{\mathbb{Z}G}{\mathbb{Z}Gk}$, we apply Theorem \ref{maintheorem} for $n=1$ to see (2) holds.

For (3) and (4), the proof is similar to the proof of Corollary \ref{cor1}. In fact, (3) and (4) hold without the assumption $G\in\mathcal{G}$.
\end{proof}

For a Bernoulli shift action $G\curvearrowright X$ of a property (T) group $G$, Sorin Popa asked whether $H^n(G\curvearrowright X;\mathbb{T})=H^n(G, \mathbb{T})$ holds for $n>1$ \cite[Section 6.6]{Popa_3}; unfortunately, this is not true for $n=2$ in full generality.
\begin{corollary}
There exists a countable discrete group $G$ with property (T) such that $H^2(G\curvearrowright \mathbb{T}^G;\mathbb{T})\neq H^2(G, \mathbb{T})$.
\end{corollary}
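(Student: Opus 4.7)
The plan is to use Corollary~\ref{cor2}(1): for $G\in\mathcal{G}$, the equality $H^2(G\curvearrowright\mathbb{T}^G;\mathbb{T})=H^2(G,\mathbb{T})$ forces $H^2(G,\mathbb{Z}G)=0$. So I only need to exhibit a property (T) group $G$ with $H^2(G,\mathbb{Z}G)\neq 0$, and then the contrapositive of the corollary will deliver the desired inequality.

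For the example, I would invoke Gromov's density model of random groups. Fix a density parameter $d$ with $1/3<d<1/2$. With overwhelming probability as the relator length tends to infinity, the resulting finitely presented group $G$ is infinite, torsion-free, and non-elementary word hyperbolic (Gromov, Ollivier); has Kazhdan's property (T) (\.Zuk); and has an aspherical standard presentation 2-complex, so $G$ is of type F and $cd(G)\leq 2$. The keyword ``random groups'' in the abstract suggests this is the intended source of examples.

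To conclude $H^2(G,\mathbb{Z}G)\neq 0$ for such a $G$, I would argue by contradiction. If $H^2(G,\mathbb{Z}G)=0$, then since $G$ is of type FP, Bieri's criterion gives $cd(G)=\max\{n\colon H^n(G,\mathbb{Z}G)\neq 0\}\leq 1$, so by Stallings--Swan $G$ would be free. But a nontrivial free group surjects onto $\mathbb{Z}$ and therefore fails property (T), a contradiction. Hence $H^2(G,\mathbb{Z}G)\neq 0$, and Corollary~\ref{cor2}(1) then yields $H^2(G\curvearrowright\mathbb{T}^G;\mathbb{T})\neq H^2(G,\mathbb{T})$.

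The main point requiring care is aligning the parameter ranges so that property (T), hyperbolicity, and asphericity of the presentation 2-complex all hold simultaneously for the same random model; the window $1/3<d<1/2$ accommodates all three ingredients, so this verification is routine rather than a genuine obstacle. An alternative that avoids randomness altogether would be to use an explicit property (T) group acting properly and cocompactly on a $2$-dimensional contractible simplicial complex (for instance, groups arising from Ballmann--\'Swi\polhk{a}tkowski-type constructions or from certain hyperbolic buildings), to which the same Bieri plus Stallings--Swan dichotomy applies verbatim.
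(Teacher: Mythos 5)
Your proposal is correct and follows essentially the same route as the paper: reduce via Corollary \ref{cor2}(1) (with property (T) giving $G\in\mathcal{G}$) to producing a property (T) group with $H^2(G,\mathbb{Z}G)\neq 0$, and take a Gromov random group at density $d\in(1/3,1/2)$, which is with overwhelming probability torsion-free, hyperbolic, of type $FP$ and has (T). The only difference is the final homological step, and it is harmless: the paper quotes $cd(G)=2$ together with \cite[Chapter VIII, Proposition 6.7]{GTM_87} to conclude $H^2(G,\mathbb{Z}G)\neq 0$, whereas you get the same conclusion from $cd(G)\leq 2$, type $FP$, Bieri's criterion and the observation that a nontrivial free group surjects onto $\mathbb{Z}$ and so cannot have property (T) (Stallings--Swan).
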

\begin{proof}
By Corollary \ref{cor2} (1) for $n=2$, we only need to find a group $G$ with (T) and non-trivial $H^2(G,\mathbb{Z}G)$. Then by \cite[Chapter VIII, Proposition 6.7]{GTM_87}, it suffices to find a group $G$ with (T) such that it is of type $FP$ and has cohomological dimension $cd(G)=2$. Such a group exists, see e.g. \cite[Theorem 1]{Olliver}\cite[Section 9.B]{Gromov}\cite[Section I.2.b., Theorem 11; Section I.3.g., Theorem 27]{Random_group}. Indeed, we can take $G$ to be a random group in Gromov's density model at density $d\in (\frac{1}{3}, \frac{1}{2})$. It is with overwhelming probability torsion free, word-hyperbolic, of cohomological dimension 2 \cite[Section I.3.b]{Random_group}, (hence of type $FP$ by \cite[Proposition 6.3(3)]{Kapovich} and \cite[Chapter VIII, Proposition 6.1]{GTM_87}) and has property (T) by Zuk \cite{Zuk} (further clarified by Kotowski-Kotowski \cite{Kotowski}).
\end{proof}

\begin{remark}
The important problem of computing $H^2(G\curvearrowright X; \mathbb{T})$ explicitly for a Bernoulli action of various examples of property (T) groups remains open. In particular, it seems unclear whether $H^2(G\curvearrowright X; \mathbb{T})=H^2(G, \mathbb{T})$ holds for $G=SL_3(\mathbb{Z})$.
\end{remark}

\section{Further Questions}\label{section5}
We end this paper by discussing a few open questions which arose from our investigation.

(1) Can we extend \cite[Corollary 4.2]{Peterson-Sinclair} to second cohomology groups?

(2) Can we find a group $G\in \mathcal{G}$  and $f\in \mathbb{Z}G$ such that the principal algebraic action $G\curvearrowright X_f:=\widehat{\frac{\mathbb{Z}G}{\mathbb{Z}Gf}}$ is weakly mixing and has nontrivial $H^1(G,\frac{\mathbb{Z}G}{\mathbb{Z}Gf})$, or in general, $H^1(G\curvearrowright X_f; \mathbb{T})\neq H^1(G, \mathbb{T})$? Can we find such an example with $f\in \mathbb{Z}G\cap\ell^1(G)^{\times}$? If such an example exists, it would not be conjugate to a Bernoulli shift. Note that
$H^1(G,\frac{\mathbb{Z}G}{\mathbb{Z}Gf})=0$ is equivalent to the right multiplication by $f$ map $R_f: H^2(G, \mathbb{Z}G)\to H^2(G, \mathbb{Z}G)$ is injective for $G\in\mathcal{G}$ and $f\in\mathbb{Z}G$ is not a right-zero divisor. Also note that the right $G$-module structure on $H^2(G,\mathbb{Z}G)$ might not be trivial, which was pointed out to us by Boris Okun.

(3) Does there exist a weakly mixing principal algebraic action $G\curvearrowright X_f:=\widehat{\frac{\mathbb{Z}G}{\mathbb{Z}Gf}}$ such that $0\neq H^2(G, \frac{\mathbb{Z}G}{\mathbb{Z}Gf})=\frac{H^2(G\curvearrowright X_f; \mathbb{T})}{H^2(G, \mathbb{T})}$? This was suggested to us by the referee.

\bibliographystyle{amsplain}

\end{document}